\newcommand{\conv}{\rm conv}
\newcommand{\ha}{HA}
\newcommand{\rr}{R}
\newcommand{\La}{\rm L}
\newcommand{\as}{\mathcal A}
\newcommand{\gr}{\Gamma}
\newtheorem{thm}{Theorem}
\newtheorem{prop}{Proposition}
\newtheorem{cor}{Corollary}
\newtheorem{lem}{Lemma}
\title {Ramanujan's theorem and highest abundant numbers}
\author {Oleg R. Musin}
\begin{document}
\date{}
\maketitle

\begin{abstract}  In 1915, Ramanujan proved asymptotic inequalities for the sum of divisors function, assuming the Riemann hypothesis (RH). We consider a strong version of Ramanujan's theorem and define highest abundant numbers that are extreme with respect to the Ramanujan and Robin inequalities. Properties of  these numbers are very different depending on whether the RH is true or false.  
\end {abstract}



\medskip

\section{Introduction}

The function $\sigma(n)=\sum_{d|n}{d}$ is the {\em sum of divisors} function.  In 1913  Gr\"onwall (see \cite[Theorem 323]{HW}) 
proved that the asymptotic maximal size of $\sigma(n)$ satisfies
$$
\limsup\limits_{n\to \infty}{G(n)}=e^\gamma, \quad G(n):=\frac{\sigma(n)}{n\log{\log{n}}}, \; n\ge2, 
$$
where $\gamma\approx 0.5772$ is the Euler--Mascheroni constant. Robin \cite{Robin} showed that the Riemann hypothesis (RH) is true if and only if
$$
\sigma(n)<e^\gamma n\log{\log{n}} \; \mbox{ for all } \; n>5040. \eqno (\rr)
$$
 Briggs'  computation of the colossally abundant numbers implies (R) for  $n<10^{(10^{10})}$ \cite{Briggs}.  According to Morrill and Platt \cite{MP18}, (R) holds for all integers  $5040<n<10^{(10^{13})}$.


\medskip

  A positive integer $n$ is called {\em superabundant} (SA) if
$$
\frac{\sigma(k)}{k}< \frac{\sigma(n)}{n}  \; \mbox{ for all integer } \; k \in[1,n-1].
$$
{\em Colossally abundant} numbers  (CA) are those numbers $n$ for which there is $\varepsilon>0$ such that
$$
\frac{\sigma(k)}{k^{1+\varepsilon}}\le \frac{\sigma(n)}{n^{1+\varepsilon}}  \; \mbox{ for all } \; k\in{\mathbb N}.
$$

Bachmann  (see \cite[Theorem 324]{HW})  showed that on average, $\sigma(n)$ is around $\pi^2n/6$. Bachmann and Gr\"onwall's results ensure that for every $\varepsilon>0$ the function ${\sigma(n)}/{n^{1+\varepsilon}}$  has a maximum and that as $\varepsilon$  tends to zero these maxima will increase. Thus there are infinitely many CA numbers.

SA and CA numbers were studied in detail by Alaoglu \& Erd\H{o}s \cite{AE} and Erd\H{o}s \& Nicolas \cite{EN}. The study of numbers with $\sigma(n)$ large  was initiated by Ramanujan \cite{Ram15}. In fact, SA and CA numbers had been studied by Ramanujan in 1915. Unknown to Alaoglu and Erd\"os, about 30 pages of Ramanujan's  paper ``Highly Composite Numbers''' were suppressed. Those pages were finally published in 1997 \cite{Ram97}.

Let 
$$
F (x,k):=\frac{\log(1+1/(x+...+x^k))}{\log{x}},
$$ 
$$
E_p:=\{F(p,k)\,|\, k\ge1\}, \quad p \; \mbox{ is prime}, 
$$
and 
$$
E:=\bigcup\limits_p{E_p}=\{\varepsilon_1,\varepsilon_2,...\}=\left\{\log_2\left(\frac{3}{2}\right),\log_3\left(\frac{4}{3}\right),\log_2\left(\frac{7}{6}\right),...\right\}.
$$
 Alaoglu and Erd\"os \cite[Theorem 10]{AE} showed that if $\varepsilon$ is not {\em critical}, i.e. $\varepsilon\notin E$, then ${\sigma(k)}/{k^{1+\varepsilon}}$ has a unique maximum attained at the number $n_\varepsilon$. If $\varepsilon$ satisfies $\varepsilon_i>\varepsilon>\varepsilon_{i+1}$, $i\in{\mathbb N}$,  then $n_\varepsilon$ is constant on the interval $(\varepsilon_{i+1},\varepsilon_{i})$ and we call it $n_i$. Moreover, 
$$
n_\varepsilon=\prod\limits_{p\in{\mathbb P}}{p^{a_\varepsilon(p)}}, \; \mbox{ where } \; a_\varepsilon(p)=\left\lfloor{\frac{\log(p^{1+\varepsilon}-1)-\log(p^{\varepsilon}-1)}{\log{p}}}\right\rfloor-1. 
$$
In particular, Alaoglu and Erd\H{o}s in their 1944 paper found all SA and CA numbers up to  $10^{18}$. The first 14 CA numbers $n_1, n_2,...,n_{14}$  are 
$$
2, 6, 12, 60, 120, 360, 2520, 5040, 55440, 720720, 1441440, 4324320, 21621600, 367567200. 
$$



 Robin \cite[Sect. 3: Prop. 1]{Robin} showed that if the Riemann hypothesis is false, then there exists a counterexample to the Robin criterion (R) which is a colossally abundant number.  Thus, it suffices to check (R) only for CA numbers.

\medskip 

  Ramanujan, see \cite[p. 143]{Ram97}, proved that if $n$ is a CA number (he called CA numbers as {\em generalized superior highly composite}) then under the RH the following inequalities hold  
$$
\limsup\limits_{n\to\infty}{\left(\frac{\sigma(n)}{n}-e^\gamma\log\log{n}\right)\sqrt{\log{n}}} \le -c_1, \; c_1:=e^\gamma(2\sqrt{2}-4-\gamma+\log{4\pi}) \approx 1.3932, \eqno (1)
$$
$$
\liminf\limits_{n\to\infty}{\left(\frac{\sigma(n)}{n}-e^\gamma\log\log{n}\right)\sqrt{\log{n}}} \ge -c_2,\; c_2:=e^\gamma(2\sqrt{2}+\gamma-\log{4\pi}) \approx 1.5578. \eqno (2)
$$

Denote 
$$
T(n):= {\left(e^\gamma\log\log{n}-\frac{\sigma(n)}{n}\right)\sqrt{\log{n}}}. 
$$
It is easy to see that Ramanujan's inequalities (1) and (2) yield the following fact:

\medskip

\noindent {\em If the RH is true, then there is $i_0$ such that for all CA numbers $n_i, \, i\ge i_0,$ we have} 
$$
1.393< T(n_i)<1.558.  \eqno (3) 
$$

Note that  (2) does not hold for all integers. Indeed, if $p_i$ is prime, then $\sigma(p_i)=p_i+1$. Therefore, $$\limsup\limits_{i\to\infty} {T(p_i)}=\infty.$$
However, (1) holds for all numbers. In Section 2 we prove the following theorem. 

\begin{thm}[The Strong Ramanujan Theorem]\label{ThR} If the RH is true, then
$$
\liminf\limits_{n\to\infty}\,{T(n)} \ge c_1>1.393.
$$
\end{thm}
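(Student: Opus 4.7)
\medskip

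\noindent\textbf{Proof strategy.} My plan is to reduce the claim for arbitrary integers $n$ to Ramanujan's inequality (1), which, applied to the sequence of CA numbers and rewritten in $T$-notation, says $\liminf_{i\to\infty} T(n_i)\ge c_1$ under RH. For each large $n$ I will let $m = n_i$ be the smallest CA number with $m\ge n$, with associated parameter $\varepsilon$. The defining inequality of CA numbers gives
$$
\frac{\sigma(n)}{n^{1+\varepsilon}} \le \frac{\sigma(m)}{m^{1+\varepsilon}}, \qquad\text{hence}\qquad \frac{\sigma(n)}{n}\le \frac{\sigma(m)}{m}\left(\frac{n}{m}\right)^{\varepsilon}\le \frac{\sigma(m)}{m},
$$
the last step using $n\le m$ and $\varepsilon>0$. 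Substituting this into the definition of $T(n)$ and rearranging,
$$
T(n)\ \ge\ T(m)\sqrt{\log n/\log m}\ -\ e^\gamma(\log\log m-\log\log n)\sqrt{\log n}.
$$

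\medskip

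\noindent The remaining task is to show that, as $n\to\infty$, both $\log n/\log m\to 1$ and $(\log\log m - \log\log n)\sqrt{\log n}\to 0$, so that $T(n)\ge T(m)(1+o(1))-o(1)$. For this I will use the structural fact that consecutive CA numbers $m' < m$ differ by a prime factor: writing $m = p\cdot m'$, and noting that $m' < n\le m$, one has $\log m-\log n\le \log p$ and
$$
\log\log m-\log\log n\ \le\ \frac{\log p}{\log n},\qquad \frac{\log n}{\log m}\ \ge\ 1-\frac{\log p}{\log m}.
$$
Both errors are thus controlled by the single quantity $\log p/\sqrt{\log n}$, and it suffices to show $\log p = o(\sqrt{\log n})$.

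\medskip

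\noindent To bound the largest prime factor $p$ of $m = n_\varepsilon$, note that the condition $F(p,1)=\log(1+1/p)/\log p\ge \varepsilon$ forces $p\log p\ll 1/\varepsilon$, and in particular $\log p\ll \log(1/\varepsilon)$. On the other hand, the Alaoglu--Erd\H{o}s product formula for $n_\varepsilon$ combined with the prime number theorem yields $\log n_\varepsilon\asymp 1/(\varepsilon\log(1/\varepsilon))$, so that $\log p = O(\log\log n_\varepsilon) = O(\log\log n)$, which is certainly $o(\sqrt{\log n})$. Assembling the pieces and taking $\liminf$ along $n$ gives $\liminf T(n)\ge c_1$; the strict inequality $c_1>1.393$ is a direct numerical evaluation of $c_1 = e^\gamma(2\sqrt{2}-4-\gamma+\log 4\pi)\approx 1.3932$. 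The main obstacle will be making the bound $\log p = O(\log\log n)$ fully quantitative and uniform in the CA parameter $\varepsilon$: it rests on the Alaoglu--Erd\H{o}s formula for the exponents $a_\varepsilon(p)$ together with PNT estimates, and requires careful bookkeeping, though no essentially new idea.
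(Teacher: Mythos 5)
Your proposal is correct in substance and ultimately rests on the same two pillars as the paper's proof --- Ramanujan's inequality (1) along the CA numbers, and the fact that consecutive CA numbers are close on a logarithmic scale --- but your comparison step is genuinely different. The paper invokes Robin's Proposition 1, $G(n)\le\max\bigl(G(n_{i-1}),G(n_i)\bigr)$ for $n_{i-1}<n<n_i$, and splits into two cases, which keeps the error purely multiplicative: in the harder case $T(n)>T(n_i)\,f(n_{i-1})/f(n_i)$ with $f(n)=\sqrt{\log n}\,\log\log n$, so only $\log n_{i-1}/\log n_i\to1$ is needed. You instead compare only upward, using the defining inequality of the CA number $m=n_i\ge n$ (equivalently, that CA numbers are superabundant) to get $\sigma(n)/n\le\sigma(m)/m$; this avoids Robin's proposition and the case split, at the price of the additive error $e^\gamma(\log\log m-\log\log n)\sqrt{\log n}$, which you then control by the same gap estimate, and your liminf bookkeeping (using $T(m)\ge c_1-\delta>0$ eventually, so the factor $\sqrt{\log n/\log m}\le 1$ tending to $1$ does no harm) is sound. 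Two small repairs: the quotient of two consecutive CA numbers is a prime \emph{or a product of two distinct primes} (\cite[p.~455]{AE}, \cite[Lemma 6.15]{Bro}), not always a prime, so your bound should read $\log m-\log n\le 2\log P(m)$ --- harmless, since only a constant factor changes; and the bound you defer to ``careful bookkeeping,'' namely $\log P(m)=O(\log\log m)$ and hence $o(\sqrt{\log n})$, does not need a fresh PNT computation with the exponents $a_\varepsilon(p)$: Alaoglu--Erd\H{o}s \cite[Theorem 7]{AE} already gives $P(m)\sim\log m$ for superabundant (hence CA) numbers, which is exactly what the paper cites. With these adjustments your argument is complete; it is slightly more self-contained on the comparison side (only the CA definition is used), while the paper's case analysis makes the error analysis a bit shorter.
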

It is an interesting open problem: {\em Can Ramanujan's constant $c_1$ be improved?}

\medskip

 Theorem \ref{ThR} implies the following inequality (see Corollary \ref{corR} in Section 2): 

\medskip

\noindent{\em If the RH is true, then there is $m_0$ such that for all  $n>m_0$ we have} 
$$
\sigma(n)+\frac{1.393\,n}{\sqrt{\log{n}}} < e^\gamma n\log\log{n} \eqno (4)
$$ 
which is stronger than {\em Ramanujan's theorem} \cite[Theorem 7.2]{Bro}:

\medskip

\noindent{\em If the RH is true, then there is $m_0$ such that  for all  $n>m_0$ we have}
$$
\sigma(n)<e^\gamma n\log{\log{n}}.  \eqno (5)
$$



\medskip

Note that, for fixed $\varepsilon>0$,  
 CA numbers $n$ may be viewed as maximizers of 
 $$
 Q(k)-\varepsilon \log{k}=\log({\sigma(k)}/{k^{1+\varepsilon}}), \quad Q(k):=\log{\sigma(k)}-\log{k}.
 $$ 
 Equivalently,  $n$ is CA if $(x_n,A(x_n))$ is a {\em vertex of the convex envelope of $A$ on $D$,}  where 
 $$x_k:=\log{k},  \quad A(x_k):=x_k-\log{\sigma(k)}=-Q(k), \quad D:=\{x_k\}, \quad k\ge2,$$ 
see details in  Section 3, Example 1. 
  
\medskip  

Let  $n\ge2$ and $s$ be a real number. Denote 
$$
\rr_s(n):=\left(e^\gamma n\log\log{n}-\sigma(n)\right)(\log{n})^s.  
$$

Now we define {\em Highest Abundant} (HA) numbers.  {\em We say that  $n\in D\subset{\mathbb N}$ is $\ha$ with respect to $\rr_s$ and write $n\in\ha_s(D)$ if for some real $a$ 
$$
\rr_s(k)-ak
$$
attains its minimum on $D$ at $n$. For  $D=\{n\in{\mathbb N}\,|\, n\ge 5040\}$ we denote $\ha_s(D)$ by $\ha_s$. 
}

\medskip

Actually, if $D$ is infinite, then  $\ha_s(D)$ can be empty or contain only one number $m_0$. It is clear that $m_0$ is the minimum number in $D=\{m_0=x_0, x_1,...\}$.  Then   there is $a_0$ such that $m_0$ is defined by any $a\le a_0$. 

It can be shown, see Proposition \ref{prop1} in Section 3, if $\ha_s(D)=\{m_0,m_1...\}$ contains at least two numbers, then there is a set of {\em critical} values $a$, $\as_s(D)=\{a_1,a_2,...\}$, $a_1<a_2<...$, such that if $a$ is not critical, then  $\rr_s(n)-na$ has a unique minimum on $D$ attained at the number $m_a$. If $a\in (a_i,a_{i+1})$, $i=1,2,...$, then $m_a$  is constant on the interval   $(a_i,a_{i+1})$ and $m_a=m_i$. In fact, $a_i$ is the slope of $\rr_s$ on $[m_{i-1},m_i]$, i.e. 
$$
a_i=\frac{\rr_s(m_i)-\rr_s(m_{i-1})}{m_i-m_{i-1}}. 
$$


%
We see that definitions of CA and HA numbers are similar, in both cases numbers can be determined through the vertices  of the convex envelopes of certain functions. In Example 3 (Section 3) is considered HA numbers with respect to $\rr_s$, $s=1$, on $D=[2,n_{13}=21621600]$. There are 13 HA numbers in this interval, 12 of them are CA numbers (except $n_6=360$) and one more $m=2162160$ is SA but $m$ is not CA.
However,  properties of HA and CA numbers are different. The property that $\ha_s$  is infinite  depending on whether the RH is true or false.   


\begin{thm}\label{Th1} (i) Let $s>1/2$. If the RH is true, then $\ha_s$  is infinite  and $\lim\limits_{n\to\infty}{a_n}=\infty$. \\
If the RH is false, then $\ha_s$  is empty. 

\medskip

\noindent (ii) Let $s\le0$. If the RH is false, then $\ha_s$  is infinite, all $a_i<0$ and $\lim\limits_{n\to\infty}{a_n}=0.$ \\
If the RH is true, then $\ha_s=\{5040\}$ and   $\as_s=\{0\}$. 
\end{thm}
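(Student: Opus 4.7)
The strategy is to reduce each of the four cases to a statement about the asymptotic behavior of $\rr_s(n)/n$ and the geometry of the lower convex envelope $E$ of the point set $\{(k,\rr_s(k)):k\ge 5040\}$, since $\ha_s$ is precisely the set of abscissae at which $E$ has a vertex and the slopes $a_n$ are the slopes of its consecutive edges. Three ingredients drive the analysis: (I) Theorem~\ref{ThR} gives, under RH, the bound $\rr_s(n)/n \ge (c_1-o(1))(\log n)^{s-1/2}$; (II) Robin's unconditional estimate $\sigma(n)/n < e^\gamma\log\log n + c'/\log\log n$ gives $\rr_s(n)/n \ge -c'(\log n)^s/\log\log n$, which tends to $0$ for $s\le 0$; (III) Robin's $\Omega_+$ estimate, valid when RH fails, supplies infinitely many $n$ with $\rr_s(n) < -c\,n\,(\log n)^{s-b}$ for some $b<1/2$.

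Part (i), $s>1/2$, is quick. Under RH, (I) gives $\rr_s(n)/n \to +\infty$, so $\rr_s(k)-ak \to +\infty$ for every $a$ and a finite minimum always exists; the minimizer $m_a$ must tend to infinity as $a\to\infty$ (else some fixed $k>\sup m_a$ would eventually overtake it), so $\ha_s$ is infinite, and the slopes $a_n$ must tend to $\infty$ since a bounded slope sequence would force $E$ to grow linearly, contradicting $\rr_s(n)/n\to\infty$ along HA numbers. Under $\neg$RH, (III) together with $s-b>0$ gives $\rr_s(n)/n \to -\infty$ along a subsequence, so $\rr_s(k)-ak$ is unbounded below for every $a$ and $\ha_s=\emptyset$.

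Part (ii), $s\le 0$, needs more care. Under RH, Robin's inequality gives $\rr_s(k)>0$ for $k>5040$, while a direct computation gives $\rr_s(5040)<0$; the comparison $\rr_s(k)-\rr_s(5040) > 0 \ge a(k-5040)$ shows $5040$ is the unique minimizer for every $a\le 0$. For $a>0$, Ramanujan's inequality (2) applied to the CA subsequence yields $\rr_s(n_i)/n_i \to 0$, hence $\rr_s(n_i)-an_i \to -\infty$ and no finite minimum exists; consequently $\ha_s=\{5040\}$ and $\as_s=\{0\}$. Under $\neg$RH, (II) yields $\rr_s(k)+|a|k \ge k(|a|-c'(\log k)^s/\log\log k) \to +\infty$ for every $a<0$, so a finite minimum exists, while (III) forces $\rr_s \to -\infty$ along a subsequence, so the minimizer escapes to infinity as $a\to 0^-$ and $\ha_s$ is infinite.

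The hardest step is then to pin down $\lim_n a_n = 0$ with $a_n<0$ throughout in part (ii) under $\neg$RH. Writing $a_* = \lim_n a_n$ for the asymptotic slope of $E$, the envelope inequality $E\le \rr_s$ together with the $\Omega_+$ dips of (III) gives $a_* \le \liminf_k \rr_s(k)/k \le 0$; conversely, the fact that $\rr_s(k)+|a|k \to +\infty$ for each $a<0$ produces a supporting line of slope $a$ to $E$, forcing $a_* \ge 0$. Hence $a_*=0$. Strict monotonicity of the envelope's slopes then forces $a_n<0$ for all $n$, and the continued dips of $\rr_s$ below any fixed level ensure that the envelope genuinely descends past every vertex, ruling out a stall at slope $0$.
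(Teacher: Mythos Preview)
Your argument is correct and follows essentially the same route as the paper: the four cases are driven by the same ingredients---Theorem~\ref{ThR} (equivalently Corollary~\ref{corR}) and Ramanujan's inequality~(2) under RH, and Robin's inequalities~(7) and~(8) otherwise---together with the convex-envelope geometry that the paper packages into Lemmas~\ref{L3}--\ref{L6} and Theorems~\ref{ThM1}--\ref{ThM2}. The only difference is presentational: the paper first abstracts the envelope lemmas and proves the more general Theorems~\ref{ThM1} and~\ref{ThM2} for arbitrary weights $\tau(n)$, then specializes to $\tau(n)=(\log n)^s$, whereas you argue directly for $\rr_s$.
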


In Section 4 (Theorems \ref{ThM1} and \ref{ThM2}) we consider extensions of Theorem \ref{Th1}. Proofs of these theorems rely on Robin's inequalities (7) and (8)  [Section 4], the strong Ramanujan theorem and his inequality (2), namely on Corollary \ref{cor2} in Section 2.

\medskip

Let $h_n:=\sum_{i=1}^n{1/i}$ denote the harmonic sum. Using (R) Lagarias \cite{Lagarias} showed that the Riemann hypothesis is equivalent to the following inequality 
$$
L_0(n):= h_n+\exp(h_n)\log(h_n)-\sigma(n)>0 \; \mbox{ for all } \; n>1.  \eqno (\La)
$$
In Section 4 we consider an analog of Theorem \ref{Th1} for $(\La)$ on $D={\mathbb N}$.

\section{The strong Ramanujan theorem}
Ramanujan's theorem in the form of (5) is present in \cite[Theorem 7.2]{Bro}, \cite{NS14}, \cite[p. 152]{Ram97} and other places. This theorem can be easily derived from (1) for the  CA numbers. Theorem \ref{ThR} extends (1) for all $n\in{\mathbb N}$ and is a strong version of Ramanujan's theorem, see (4). However, we could not find a proof of Theorem \ref{ThR} for arbitrary integers.  In this section we fill this gap.


\begin{proof}[Proof of Theorem \ref{ThR}]   Let
$$
f(n):=\sqrt{\log{n}}\,\log\log{n}, \quad g(n):=e^\gamma - G(n). 
$$
Then $T(n)= f(n)\,g(n).$

  Let $S$ be the set of all non--CA integers $n>2$.  Then for every $n\in S$ there is $i=i(n)>1$ such that $n_{i-1}<n<n_{i}$, where   $n_{i-1}$ and $n_{i}$  are two consecutive CA numbers. Robin \cite[Proposition 1]{Robin} showed that
 $$G(n)\le \max(G(n_{i-1}),G(n_i)).$$ 
We divide $S$ into two disjoint subsets $S_1$ and $S_2$:
$$
S_1:=\{n\in S\,|\,G(n)\le G(n_{i-1})\}, \quad S_2:=\{n\in S\,|\,G(n_{i-1})< G(n)\le  G(n_i)\}.
$$ 

\noindent (1) Suppose $n\in S_1$. Then $g(n)\ge g(n_{i-1})$, where  $i=i(n)$.  Since $f$ is a monotonically increasing function,  we have $f(n)>f(n_{i-1})$ and $T(n)>T(n_{i-1})$. Thus, 
 $$
\liminf\limits_{n\in S_1, n\to\infty} {T(n)} \ge \liminf\limits_{i\to\infty}{T(n_{i-1})}=\liminf\limits_{i\to\infty}{T(n_i)} \ge c_1. 
$$
 
 \noindent (2) Suppose $n\in S_2$. Then $g(n)\ge g(n_{i})$ and $f(n)>f(n_{i-1})$. That yields 
 $$
 T(n)>f(n_{i-1})g(n_i)=T(n_i)F(i), \quad F(i):=\frac{f(n_{i-1})}{f(n_{i})}. 
 $$
 
We have
 $$
 \lim\limits_{i\to\infty}{\frac{\log(n_{i-1})}{\log(n_{i})}}=1. \eqno (6) 
 $$
 Indeed, let $P(n)$ denote the largest prime factor of $n$.  Alaoglu \& Erd\H{o}s \cite[Theorem 7]{AE}  proved that $P(n) \sim \log{n}$ for all SA numbers. Then, in particular, it holds for CA numbers. The quotient of two consecutive CA numbers is either a prime or the product of two distinct primes \cite[page 455]{AE}, \cite[Lemma 6.15]{Bro}, i.e.  $n_{i}\le n_{i-1}P^2(n_i) \sim n_{i-1}\log^2(n_{i})$. Then we have 
 $$
1> \frac{\log(n_{i-1})}{\log(n_i)} > \frac{\log(n_i)-2\log(P(n_i))}{\log(n_i)} \sim 1-\frac{2 \log \log{n_i}}{\log n_i} \sim 1.
 $$

It is not hard to see that (6) implies  
$
 \lim\limits_{i\to\infty}{F(i)}=1. 
$
That yields
$$
\liminf\limits_{n\in S_2, n\to\infty} {T(n)} \ge \liminf\limits_{i\to\infty}{T(n_i)F(i)}=\liminf\limits_{i\to\infty}{T(n_i)} \ge c_1. 
$$

Thus, we have (1) for CA, $S_1$ and $S_2$, i.e. for all numbers.  
\end{proof}

\noindent{\bf Remark.} In the first version of this paper our proof of Case (2) relies on  \cite[Theorem 1]{Wu19}. I am very grateful to Xiaolong Wu for the idea of proving this case using (6). Note that (6) is easily derived from the results of the classical paper of Alaoglu and Erd\H{o}s \cite{AE}.  

\begin{cor}\label{corR} If the RH is true, then for every $\varepsilon>0$ there is $m_0$ such that for all  $n>m_0$ we have
$$
\sigma(n)+ (c_1-\varepsilon)\frac{n}{\sqrt{\log{n}}} < e^\gamma n\log\log{n}. 
$$ 
In particular, if\, $\varepsilon\le1.393$, then
$
\sigma(n)<e^\gamma n\log{\log{n}} \; \mbox{ for all } \; n>m_0. 
$
\end{cor}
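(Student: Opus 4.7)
The plan is to derive this corollary as an essentially direct unpacking of Theorem \ref{ThR}. By the theorem, assuming RH, we have $\liminf_{n\to\infty} T(n) \ge c_1$. The definition of $\liminf$ then gives, for any fixed $\varepsilon>0$, an integer $m_0$ such that $T(n) > c_1 - \varepsilon$ for every $n > m_0$.

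Next I would simply substitute the definition
$$T(n) = \left(e^\gamma\log\log n - \frac{\sigma(n)}{n}\right)\sqrt{\log n}$$
into the inequality $T(n) > c_1-\varepsilon$, then multiply through by $n/\sqrt{\log n}$, which is strictly positive for $n\ge 3$, and rearrange. This yields
$$\sigma(n) + (c_1-\varepsilon)\frac{n}{\sqrt{\log n}} < e^\gamma n\log\log n,$$
which is exactly the claimed inequality.

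For the final ``in particular'' statement, I would observe that because $c_1 = e^\gamma(2\sqrt{2}-4-\gamma+\log 4\pi) \approx 1.3932 > 1.393$, any choice $\varepsilon \le 1.393$ leaves $c_1 - \varepsilon > 0$. Hence the extra term $(c_1-\varepsilon)n/\sqrt{\log n}$ on the left is positive, and dropping it preserves the strict inequality, giving $\sigma(n) < e^\gamma n\log\log n$ for all $n > m_0$.

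There is no real obstacle here: the only substantive content is Theorem \ref{ThR} itself, and the corollary is a mechanical rewriting together with the numerical observation that $c_1$ strictly exceeds $1.393$. The one small point worth checking is that the implicit $m_0$ depends on $\varepsilon$, which is acceptable because the statement quantifies $m_0$ after $\varepsilon$.
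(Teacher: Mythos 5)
Your proof is correct and matches the paper's intended derivation: the corollary is stated there as an immediate consequence of Theorem \ref{ThR}, obtained exactly as you do by unpacking the $\liminf$, substituting the definition of $T(n)$, multiplying by $n/\sqrt{\log n}$, and noting $c_1\approx 1.3932>1.393$ so the extra term stays positive when $\varepsilon\le 1.393$. Nothing further is needed.
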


From  (2)  for CA numbers $n_i$ we have
$$\limsup\limits_{i\to\infty}\,{T(n_i)}\le c_2<1.558.$$ 
This fact and Corollary \ref{corR} yield the following corollary: 

\begin{cor}\label{cor2} If the RH is true, then for every $\varepsilon>0$ there is $m_0$ such that a set 
$$
M(\varepsilon):=\{n>m_0\,|\,T(n)<c_2+\varepsilon\}
$$ 
is infinite and for all $n\in M(\varepsilon)$ we have $T(n)>c_1-\varepsilon.$
\end{cor}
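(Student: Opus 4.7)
The plan is to combine the two one-sided bounds that are in hand just before the statement: the upper bound $\limsup_{i\to\infty} T(n_i)\le c_2$ coming from Ramanujan's inequality (2) along the sequence of colossally abundant numbers, and the lower bound $\liminf_{n\to\infty} T(n)\ge c_1$ coming from Theorem \ref{ThR} (equivalently, from Corollary \ref{corR}). The corollary is essentially the conjunction of these two facts with a uniform threshold $m_0$.

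First I would handle the lower bound, which is the easy half. By Theorem \ref{ThR}, $\liminf_{n\to\infty} T(n)\ge c_1$, so for the given $\varepsilon>0$ there exists a threshold $m_0^{(1)}=m_0^{(1)}(\varepsilon)$ such that $T(n)>c_1-\varepsilon$ for every integer $n>m_0^{(1)}$. In particular, every element of the set $M(\varepsilon)$ (once $m_0\ge m_0^{(1)}$) automatically satisfies $T(n)>c_1-\varepsilon$, which is the second clause of the statement.

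Next I would verify that $M(\varepsilon)$ is infinite. Ramanujan's inequality (2), applied to the CA sequence, reads $\limsup_{i\to\infty} T(n_i)\le c_2$. Therefore there exists $i_0=i_0(\varepsilon)$ such that $T(n_i)<c_2+\varepsilon$ for every $i\ge i_0$. Since $n_i\to\infty$, at most finitely many $n_i$ can fail to exceed $m_0^{(1)}$; discarding these, we obtain an infinite subsequence of CA numbers that simultaneously exceeds $m_0^{(1)}$ and satisfies the defining inequality of $M(\varepsilon)$.

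Finally I would just choose $m_0$ equal to $m_0^{(1)}$ (or any larger number if one wants to absorb the finite exceptions from the CA side into the threshold) so that both properties hold at once. There is no real obstacle here: the entire content is packaging an already-established $\liminf$ bound and an already-established $\limsup$ bound along CA numbers into a single statement about an infinite set of two-sided well-behaved integers. The only thing to watch is picking the thresholds for the two inequalities compatibly, which is handled by taking the larger of the two.
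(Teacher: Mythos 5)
Your proposal matches the paper's argument: the paper obtains the lower bound $T(n)>c_1-\varepsilon$ for all large $n$ from Theorem \ref{ThR} (via Corollary \ref{corR}) and the infinitude of $M(\varepsilon)$ from $\limsup_{i\to\infty}T(n_i)\le c_2$ along the CA numbers, exactly as you do. The proof is correct and essentially identical to the paper's.
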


\section{Convex envelope of functions}

Let $D=\{x_n\}$ be an increasing sequence. Let $h:D\to  {\mathbb R}$ be a function on $D$. We say that $h$ is {\em convex} (or {\em concave upward}\,)  on $D$ if for all  $a,x,b\in D$ such that $a< x< b$ we have
$$
h(x)\le\frac{(b-x)h(a)+(x-a)h(b)}{b-a}. 
$$

{Denote by $\Omega(f)$ the set of all convex functions $h:D\to  {\mathbb R}$ such that $h(x)\le f(x)$ for all $x\in D$.}  Suppose $\Omega(f)\ne\emptyset$. The {\em lower convex envelope} ${\breve  f}$ of a function $f$  on $D$  is defined at each point of $D$ as the supremum of all convex functions that lie under that function, i.e.
$$
\breve  f(x):=\sup\{h(x)\,|\,h\in\Omega(f)\}.
$$


\medskip 

Alternatively, $\breve f$ can be defined as follows. Let 
$$
 \gr_f:=\{(x,f(x))\in D\times{\mathbb R}\subset{\mathbb R}^2\}
$$
be the graph of $f$. The {convex hull} of $\gr_f$ in ${\mathbb R}^2$ is the set of all convex combinations of points in $\gr_f$:
$$
\conv(\gr_f):=\{c_1p_1+...+c_kp_k\,|\,p_i\in G_f, c_i\ge0, i=1,...,k, c_1+...+c_k=1\}.
$$
Then the graph
$
 \{(x,\breve  f(x))\in D\times{\mathbb R}\}
$
is the {\em lower convex hull} of $\conv(\gr_f)$.

\medskip 

It is clear, if $D$ is finite,  then $\Omega(f)$  is not empty.  However,  if $D$ is infinite, then $\Omega(f)$ can be empty, for instance if  $f(n)=-n^2$ and $D={\mathbb N}$.  

Let $f$ be a function on $D=\{x_0,x_1,...\}$ with  $\Omega(f)\ne\emptyset$. Then $\breve  f$ is a piecewise linear convex function on $D$. Hence, there is a subset 
$$
H_f:=\{m_0=x_0,m_1,...\}\subset D$$ 
such that $\breve  f$ is a linear function on $[m_{i-1},m_i]$, $\breve  f(m_i)=f(m_i)$ for all $i$,  and the sequence of slopes $A_f:=\{a_1,a_2,...\}$ is strictly monotonic increasing, i.e. $a_1<a_2<...$, where
$$
a_i:=\frac{f(m_i)-f(m_{i-1})}{m_i-m_{i-1}}. 
$$

\begin{figure}
\begin{center}

 \includegraphics[trim=0 7.5cm 0 7cm, clip,scale=.7]{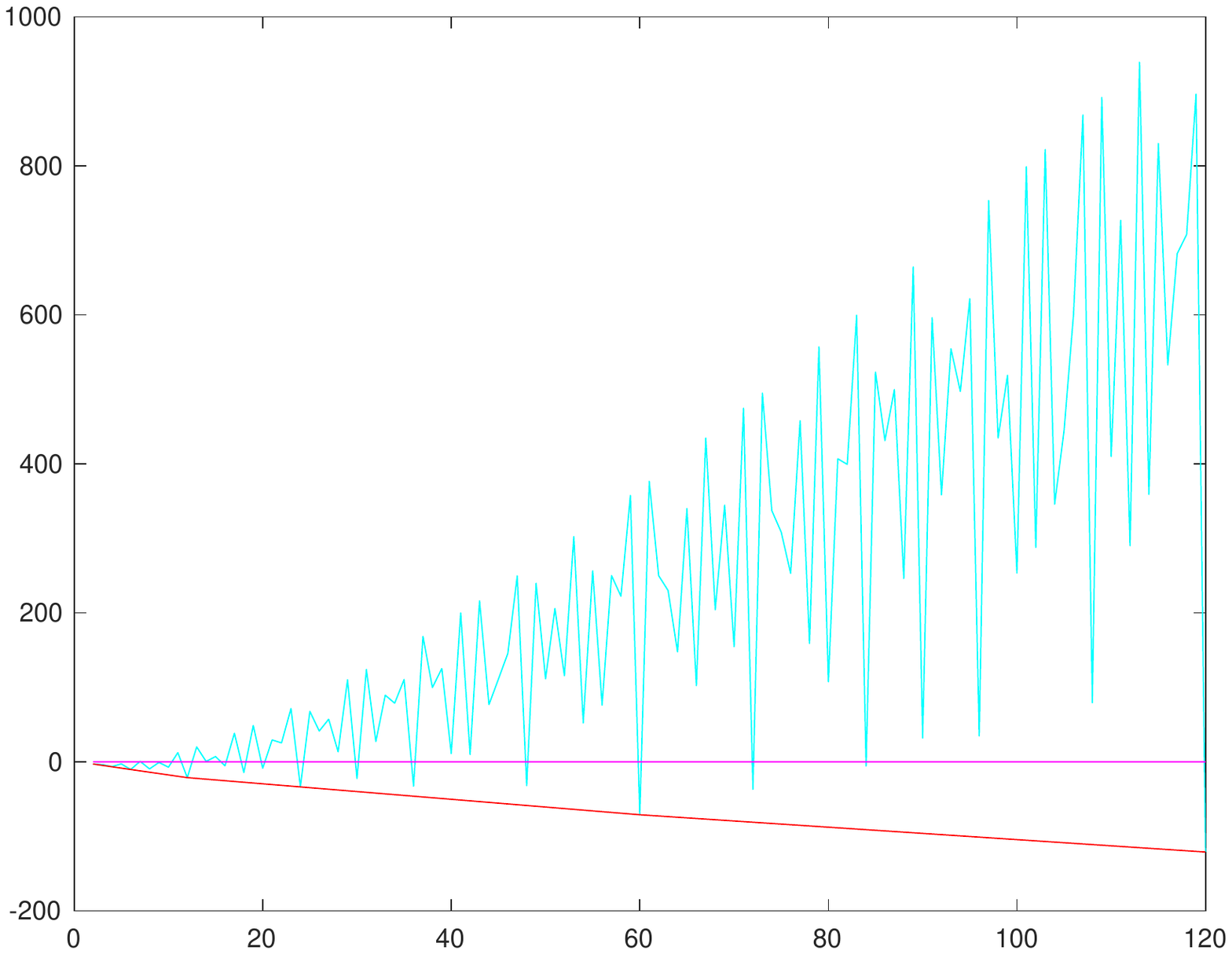}

\end{center}
\caption{Graphs of $\rr_1$ and   $\breve\rr_1$ on $D=\{2,...,120\}$.}
\end{figure}

\medskip

Let $\tilde H_f$ be a subset in $D$ such that $m\in\tilde H_f$ if for some $a\in{\mathbb R}$ the function 
$
f(x)-ax
$
attains its minimum on $D$ at $m$, i.e.
$$
 \tilde H_f:=\{m\in D\,|\,\exists  a\in{\mathbb R},\,  \forall x\in D, \,  f(m)-ma\le f(x)-ax\}.
$$

The next proposition can be easily derived from the above definitions. 
\begin{prop}\label{prop1} Let $f$ be a function on $D=\{x_n\}$ with  $\Omega(f)\ne\emptyset$.   Then $\tilde H_f$ coincides with $H_f$ and every $m_i\in H_f$, $i\ge1$, is uniquely determined by any $a\in (a_{i-1}, a_{i})$. 
\end{prop}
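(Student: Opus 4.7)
The plan is to establish the two set inclusions $H_f\subseteq\tilde H_f$ and $\tilde H_f\subseteq H_f$ separately, building in each case an explicit supporting affine function, and then to read off the uniqueness of the minimizer directly from the strict monotonicity of the slope sequence $a_1<a_2<\cdots$.

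For $H_f\subseteq\tilde H_f$, I would fix a vertex $m_i\in H_f$ and pick any slope $a$ in the ``subdifferential'' of $\breve f$ at $m_i$, that is, $a\in[a_i,a_{i+1}]$ (with the convention $a_0:=-\infty$ and an analogous convention at the upper endpoint if $H_f$ terminates). Setting
$$
\ell_a(x):=f(m_i)+a(x-m_i),
$$
a short induction along the linear segments $[m_{j-1},m_j]$, using that each such segment has slope $a_j$ and that the $a_j$ are strictly increasing, shows $\ell_a(x)\le\breve f(x)$ for every $x\in D$. Combined with $\breve f\le f$ on $D$, this gives $\ell_a\in\Omega(f)$ and rearranging yields $f(x)-ax\ge f(m_i)-am_i$ for all $x\in D$, so $m_i\in\tilde H_f$. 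Conversely, given $m\in\tilde H_f$ with witnessing slope $a$, the same affine map $\ell_a(x):=f(m)+a(x-m)$ satisfies $\ell_a\le f$ on $D$ and is itself convex, so $\ell_a\in\Omega(f)$; this forces $\ell_a(x)\le\breve f(x)\le f(x)$, and evaluating at $x=m$ gives $\breve f(m)=f(m)$. By the strict monotonicity of the slopes of $\breve f$, its contact set with $f$ on $D$ is precisely $\{m_0,m_1,\ldots\}=H_f$, so $m=m_i$ for some $i$.

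The uniqueness clause is then immediate: if $a$ lies strictly inside an open interval between two consecutive critical slopes, any two distinct minimizers $m_i\ne m_j$ of $f(x)-ax$ would force the secant slope $(f(m_j)-f(m_i))/(m_j-m_i)$ to equal $a$. But this secant slope is a convex combination of the intermediate slopes $a_k$, and by strict monotonicity of $a_1<a_2<\cdots$ it lies strictly between the smallest and largest $a_k$ appearing, so it must coincide with one of the critical values, contradicting the placement of $a$ strictly in the gap.

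The only genuine subtlety is to rule out ``hidden'' contact points of $\breve f$ and $f$ in the interior of a linear piece, which would otherwise break the reverse inclusion at a critical slope $a=a_j$; this is exactly what the strict-monotonicity clause in the definition of $H_f$ precludes, and it is irrelevant at the non-critical $a$'s appearing in the uniqueness statement, so the proposition goes through as stated.
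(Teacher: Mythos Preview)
The paper offers no detailed argument here---it merely says the proposition ``can be easily derived from the above definitions''---so your write-up already goes well beyond what the paper provides, and the route via supporting affine lines under $\breve f$ is the natural one. Your forward inclusion $H_f\subseteq\tilde H_f$ and your uniqueness paragraph for non-critical $a$ are both fine.

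Your treatment of the reverse inclusion $\tilde H_f\subseteq H_f$, however, has a real gap. You correctly flag the issue of ``hidden'' contact points of $f$ and $\breve f$ interior to a linear piece $[m_{j-1},m_j]$, but the assertion that ``the strict-monotonicity clause in the definition of $H_f$ precludes'' them is wrong. Strict monotonicity of $a_1<a_2<\cdots$ is only the device that pins $H_f$ down as the \emph{minimal} set of breakpoints of $\breve f$; it does not prevent $f$ from coinciding with $\breve f$ at an interior point of a piece. Concretely, take $D=\{1,2,3,4\}$ and $f(1)=0$, $f(2)=1$, $f(3)=2$, $f(4)=5$: then $\breve f=f$, the slope changes only at $3$, so $H_f=\{1,3,4\}$ with $a_1=1$, $a_2=3$; yet for $a=1$ the function $f(x)-x$ attains its minimum at $1,2,3$ simultaneously, so $2\in\tilde H_f\setminus H_f$. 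Thus the literal set equality $\tilde H_f=H_f$ can fail at critical slopes, and the proposition as written carries this imprecision; your final sentence papers over it rather than resolving it. What is robustly true---and what your uniqueness argument establishes---is that for every non-critical $a$ the minimizer of $f(x)-ax$ is unique and lies in $H_f$, which is also all that the paper actually uses downstream.
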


\noindent{\bf Example 1.}  Let $D:=\{x_n\}$, where $x_n:=\log{n}$,  $n\in{\mathbb N}$. Let $f(x_n):=x_n-\log{\sigma(n)}$. Then $f(x_n)=-Q(n)$, where $Q$ is defined in Section 1.  It is easy to see that in this case Proposition \ref{prop1} yields that $H_f$ is the set of CA numbers and $A_f=\{-\varepsilon_i\}$.


If $D=\{x_0,x_1,...,x_l\}$ is finite, then $H_f:=\{m_0=x_0,m_1,...,m_k\le x_l\}$ and the cardinality $|A_f|=k$. If $D$ is infinite, then $A_f$ can be (i) infinite or (ii) finite.   It is not hard to see that in case (ii) $H_f:=\{m_0,m_1,...,m_k\}$ and $A_f=\{a_1,...,a_k,a_{k+1}\}$, where 
$$
\breve  f(n)=\breve  f(m_k)+(n-m_k)\,a_{k+1} \; \mbox{ for all } \; n\in D, \; n\ge m_k. 
$$
Let $m_{k+1}:=\infty$. Then for both cases we have  that  $a_i$ is the slope of $\breve  f$ on $[m_{i-1},m_i]$.

\medskip

\noindent{\bf Example 2.} Let $f(n)=\rr_1(n)=\left(e^\gamma n\log\log{n}-\sigma(n)\right)\log{n}$  on  $D=\{2,...,120\}$. Then  $H_f=\{2,6,12,60,120\}$. (Note that $H_f$ consists of the first five CA numbers.)  In this case $\breve\rr_1$ is a convex monotonically decreasing function on $D$, see Figure 1. 

\medskip

\noindent{\bf Example 3.}  Let $f=\rr_1$ 
on $D=\{2,3,...,n_{13}=21621600\}$. Then 
$$
H_f=\{2, 6, 12, 60,1 20, 2520, 5040, 55440, 720720, 1441440, 2162160, 4324320,  21621600\}.
$$
In this list of 13 numbers $m_0,...,m_{12}$ there are 12 out of the first 13 CA numbers except $n_6=360$. However, $m_{10}$ is an SA  number $2162160=2^4\cdot 3^3\cdot 5\cdot 7\cdot 11\cdot 13$ but is not CA. $\breve\rr_1$ on $H_f$ has a minimum at $m_5=2520$ and is positive for $m_i>m_6=5040$. We have
$$
a_1<...<a_5<0<a_6<...<a_{12}. 
$$

Now we prove the main results of this section. 

\begin{lem}\label{L3} Let $D\subset{\mathbb N}$ be infinite and $n_0\in D$.  Let  $f$ and  $g$ be functions on $D$ such that 
$$
f(n)\ge g(n) \; \mbox{ for all } \; n\ge n_0 \;  \mbox{ and } \ \lim\limits_{n\to\infty}{\frac{g(n)}{n}}=\infty. 
$$
Then $A_f$ is infinite and $\lim\limits_{n\to\infty}{a_n}=\infty.$
\end{lem}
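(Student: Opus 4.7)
The plan is to verify first that the lower convex envelope $\breve f$ is well-defined, and then to deduce in one stroke that $A_f$ is infinite and that the slopes diverge. Since $f(n) \geq g(n)$ for $n \geq n_0$ and $g(n)/n \to \infty$, we have $f(n)/n \to \infty$ as $n \to \infty$ in $D$. Consequently, for each $a \in \mathbb{R}$ the set $\{n \in D : f(n) \leq an\}$ is finite, so we may choose a constant $b_a$ such that the (convex) linear function $h_a(x) := ax + b_a$ satisfies $h_a(x) \leq f(x)$ for every $x \in D$. Thus $h_a \in \Omega(f)$, which shows both that $\Omega(f) \neq \emptyset$ (so $\breve f$ is defined) and that $\breve f \geq h_a$ on $D$ for every $a$. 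Letting $n \to \infty$ along $D$ and using $h_a(n)/n \to a$, we obtain $\liminf_{n \to \infty} \breve f(n)/n \geq a$ for every $a$, i.e., $\breve f(n)/n \to \infty$.

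Next I would argue by contradiction. Suppose the slopes $\{a_i\}$ of $\breve f$ are bounded above by some $a^* < \infty$; this covers both the case that $A_f$ is infinite with $\sup_i a_i < \infty$ and the case that $A_f = \{a_1,\dots,a_k,a_{k+1}\}$ is finite (take $a^* := a_{k+1}$). Since $\breve f$ is piecewise linear and convex with every slope at most $a^*$, a straightforward telescoping over its linear pieces gives $\breve f(n) \leq \breve f(m_0) + a^*(n - m_0)$ for every $n \in D$ with $n \geq m_0$. Hence $\breve f(n)/n$ stays bounded, contradicting $\breve f(n)/n \to \infty$ from the previous step. Therefore the slopes are unbounded; in particular $A_f$ must be infinite, and since $a_1 < a_2 < \dots$ is strictly increasing, $\lim_{n\to\infty} a_n = \infty$.

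The only real subtlety is to notice that both conclusions — $A_f$ infinite and $a_n \to \infty$ — follow from a single observation: the hypothesis $g(n)/n \to \infty$ forces every convex minorant of $f$ on $D$ to eventually outgrow any affine function, and the only way a piecewise linear convex function can do that is to have unbounded slopes. Once this is seen, no case distinction between the finite and infinite $A_f$ alternatives is needed, and the remainder is a routine piece of convex analysis on a discrete set.
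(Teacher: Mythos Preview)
Your proof is correct, and it takes a genuinely different route from the paper's. The paper argues constructively: starting from $m_0=x_0$, it shows that from any current vertex $m_i$ one can always find a next vertex $m_{i+1}$ because the set of points lying on or below the provisional secant line is nonempty and finite (this finiteness being exactly the consequence of $f(n)/n\to\infty$); the process therefore never terminates, and the slopes must diverge because $f(n)/n\to\infty$. Your argument, by contrast, is global and non-constructive: you first establish $\Omega(f)\ne\emptyset$ and $\breve f(n)/n\to\infty$ by exhibiting affine minorants of arbitrary slope, and then rule out a bounded slope sequence via a telescoping upper bound on $\breve f$. What the paper's approach buys is an explicit picture of how $H_f$ is built (which matches the spirit of the surrounding Lemmas~\ref{L4}--\ref{L6}, where similar step-by-step reasoning recurs); what your approach buys is brevity and a clean reduction to one inequality, with no need to track the iterative construction. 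Both hinge on the same single observation---that the hypothesis forces every affine function to lie below $f$ eventually---so the difference is really one of packaging rather than of underlying idea.
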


\begin{proof} By assumption, for any real $a$ there is $n_a$ such that $g(n)>an$ for all $n\ge n_a$. This fact yields that for any linear function $l(x)=ax+b$  there is no or there are finitely many $n\in D$ such that $f(n)\le l(n)$. 

Let $D=\{x_0,x_1,...\}$, $m_0:=x_0$ and $H_f^{(0)}:=\{m_0\}$. Suppose  $H_f^{(i)}=\{m_0,\ldots,m_i\}$ and $\as_f^{(i)}=\{a_1,\ldots,a_i\}$.  Let $l(x)$ be a linear function given by two points $(m_i,f(m_i))$ and $(x_{k+1},f(x_{k+1}))$, where $m_i=x_k$. Denote 
$$D_l=\{n\in D\,|\, n>m_i, f(n)\le l(n)\}.$$
 We have $1\le |D_l|<\infty$. Let $x_j$ be a number in $D_l$ such that the slope of  a linear function  given by two points $(m_i,f(m_i))$ and $(n,f(n))$, $n\in D_l$, attains its minimum at $x_j$. We denote the correspondent linear function by $l_{i+1}$. It is clear that $f(n)\ge l_{i+1}(n)$ for all $n\in D_l$. Hence, $m_{i+1}=x_j$ and $a_{i+1}$ is the slope of $l_{i+1}$. We can continue this process. Since $f(n)/n\to\infty$ as $n\to\infty$, we have $a_i\to\infty$ as $i\to\infty$. 
\end{proof}

\medskip

\begin{lem}\label{L4}  Let  $g_1$ and  $g_2$ be functions on  $D\subset{\mathbb N}$ such that for all $n\in D$ we have
$$
g_2(n)\ge g_1(n)   \; \mbox{ and } \; \lim\limits_{n\to\infty}{g_1(n)}=\infty, \quad        \lim\limits_{n\to\infty}{\frac{g_2(n)}{n}}=0.
$$
Suppose for a function $f$ on $D$ there is $n_0\in D$ such that $f(n)\ge g_1(n)$ for all $n\ge n_0$.   If there are  infinitely many $n\in D$ such that $f(n)\le g_2(n)$, then $A_f=\{a_1,...,a_k\}$ is finite and 
$$
a_1<...<a_k=0.
$$
\end{lem}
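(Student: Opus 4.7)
The plan is to first secure the existence of the lower convex envelope $\breve f$, then to show that $\breve f$ satisfies both a uniform lower bound and an asymptotic sublinear upper bound along a subsequence, and finally to deduce from these two constraints that $A_f$ must be finite with ultimate slope exactly $0$.

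First I would verify $\Omega(f)\ne\emptyset$. Since $g_1(n)\to\infty$, the values $\{g_1(n):n\in D,\,n\ge n_0\}$ are bounded below, and $\{f(n):n\in D,\,n<n_0\}$ is a finite set. Hence there is a constant $c$ with $f(n)\ge c$ on all of $D$, so the constant function $c$ lies in $\Omega(f)$. Therefore $\breve f$ is defined and satisfies $\breve f(n)\ge c$ for every $n\in D$. The hypothesis then provides a sequence $n_j\in D$ with $n_j\to\infty$ and $f(n_j)\le g_2(n_j)$; combined with $\breve f\le f$ and $g_2(n_j)/n_j\to 0$ this yields
$$
\limsup\limits_{j\to\infty}\frac{\breve f(n_j)}{n_j}\le 0.
$$

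Next I would rule out $|A_f|=\infty$. If $A_f=\{a_1,a_2,\dots\}$ were infinite, let $a^*=\lim_i a_i\in(-\infty,\infty]$. If some $a_{i_0}>0$, then convexity gives $\breve f(n)\ge \breve f(m_{i_0})+a_{i_0}(n-m_{i_0})$ for all $n\ge m_{i_0}$, contradicting the displayed bound. So $a^*\le 0$, and strict monotonicity of $\{a_i\}$ then forces every $a_i<0$, making $\breve f$ strictly decreasing on $D$. But $\{\breve f(m_i)\}_i=\{f(m_i)\}_i$ would then be strictly decreasing in $i$, whereas $f(m_i)\ge g_1(m_i)\to\infty$ as $i\to\infty$, a contradiction. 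Hence $A_f=\{a_1,\dots,a_k\}$ is finite, and on $\{n\in D:n\ge m_{k-1}\}$ the envelope is affine with slope $a_k$. If $a_k>0$, then $\breve f(n)/n\to a_k>0$, again violating the displayed bound; if $a_k<0$, then $\breve f(n)\to-\infty$, contradicting $\breve f\ge c$. Therefore $a_k=0$.

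The main obstacle is the logical core of the second paragraph: one has to combine two opposing asymptotic constraints on $\breve f$ (the sublinear upper bound from $g_2$ along a subsequence and the divergent lower bound from $g_1$ evaluated at the vertices $m_i$) and exploit the strict monotonicity of the slope sequence to upgrade $\sup_i a_i\le 0$ to genuine monotone decrease of $\breve f$. Once this is settled, pinning down the value $a_k=0$ is routine.
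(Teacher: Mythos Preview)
Your proof is correct and follows essentially the same line as the paper's own argument: use the sublinear bound coming from $g_2$ along the infinite set $\{n:f(n)\le g_2(n)\}$ to force every $a_i\le 0$, then use $f(n)\ge g_1(n)\to\infty$ (evaluated at the vertices $m_i$) to rule out an infinite, hence strictly negative, slope sequence and to pin down the terminal slope as $0$. Your version is simply more detailed---in particular you explicitly verify $\Omega(f)\ne\emptyset$, a point the paper's very terse proof passes over in silence.
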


\begin{proof} Denote $D_0:=\{n\in D\,|\,n<n_0\}$ and $D_1:=\{n\in D\,|\,n\ge n_0, f(n)\le g_2(n)\}$   By assumption, $D_1$ is infinite and for any linear function $l(x)=ax+b$ with $a>0$ there is no or there are finitely many $n\in D_1$ such that $f(n)\ge l(n)$.  Hence, all $a_i\le 0$.  Since $f(n)\to \infty$ as $n\to\infty$, we have that $A_f$ is finite and the largest $a_k=0$. 
\end{proof}

\begin{lem}\label{L5}  Let  $g_1$ and  $g_2$ be functions on  $D\subset{\mathbb N}$ such that for all $n\in D$ we have
$$
g_2(n)\ge g_1(n)  \; \mbox{ and } \; \lim\limits_{n\to\infty}{g_2(n)}=-\infty, \quad        \lim\limits_{n\to\infty}{\frac{g_1(n)}{n}}=0.
$$
Suppose for a function $f$ on $D$ there is $n_0\in D$ such that $f(n)\ge g_1(n)$ for all $n\ge n_0$.   If there are  infinitely many $n\in D$ such that $f(n)\le g_2(n)$, then $A_f$ is infinite and $\lim\limits_{n\to\infty}{a_n}=0.$
\end{lem}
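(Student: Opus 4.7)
The plan is to adapt the strategy of Lemma~\ref{L4}: here $g_2(n)\to-\infty$ will force infinitely many critical slopes, while $g_1(n)/n\to 0$ will pin their limit at $0$. Set $D_1:=\{n\in D\,|\,n\ge n_0,\ f(n)\le g_2(n)\}$; by hypothesis $D_1$ is infinite. The conditions $g_2\ge g_1$, $g_1(n)/n\to 0$, and $g_2(n)\to-\infty$ together give $g_2(n)=o(n)$ as well. Consequently, for every fixed $a<0$, $f(n)-an\ge g_1(n)+|a|n\to +\infty$ on $\{n\ge n_0\}$, which shows both that $\Omega(f)\ne\emptyset$ (adjust by a constant to cover the finite segment $n<n_0$) and, more importantly, that the infimum of $f(n)-an$ on $D$ is \emph{attained} for every $a<0$. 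By Proposition~\ref{prop1} ($\tilde H_f=H_f$), any such minimizer is some vertex $m_i\in H_f$, and by the interval description preceding Example~2 this forces $a$ into one of the open intervals $(-\infty,a_1),(a_1,a_2),(a_2,a_3),\ldots$.

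Next I show every slope $a_i\in A_f$ is strictly negative (including the tail slope $a_{k+1}$ in the finite case). If $a_i\ge 0$, then since the $a_j$'s are strictly increasing, every slope beyond index $i$ is non-negative; hence $\breve f(n)\ge\breve f(m_{i-1})$ for all $n\ge m_{i-1}$, so $f(n)\ge\breve f(m_{i-1})$ for all such $n$. This contradicts $f(n)\le g_2(n)\to-\infty$ along the infinite subset $D_1\cap\{n\ge m_{i-1}\}$.

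To rule out $A_f$ being finite, suppose $A_f=\{a_1,\ldots,a_{k+1}\}$; the previous step gives $a_{k+1}<0$. Pick any $a\in(a_{k+1},0)$. Since $a<0$ the infimum of $f(n)-an$ is attained, so by the first paragraph $a$ must lie in one of $(-\infty,a_1),(a_1,a_2),\ldots,(a_k,a_{k+1})$, all of which are contained in $(-\infty,a_{k+1})$ --- contradicting $a>a_{k+1}$. Hence $A_f$ is infinite.

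Finally, $A_f=\{a_1,a_2,\ldots\}$ is strictly increasing with each $a_i<0$, so $L:=\lim_n a_n$ exists in $(-\infty,0]$. If $L<0$, pick $a\in(L,0)$; once again the infimum of $f(n)-an$ is attained, forcing $a$ into one of the intervals $(-\infty,a_1)$ or $(a_i,a_{i+1})$, each of which lies in $(-\infty,L)$ --- contradicting $a>L$. Therefore $L=0$. The main technical point is the careful application of Proposition~\ref{prop1} together with the interval structure of $\tilde H_f=H_f$ to identify, for each $m_i$, the exact set of slopes $a$ at which it serves as the minimizer; all three conclusions (infinitude of $A_f$, negativity of every $a_i$, and $a_n\to 0$) then follow from the sublinear lower bound $g_1=o(n)$ combined with the unbounded negativity of $g_2$.
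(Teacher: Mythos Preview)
Your argument is correct. The key ingredients are the same as the paper's---the sublinearity of $g_1$ forces $f(n)-an\to+\infty$ for every $a<0$, while $g_2\to-\infty$ on the infinite set $D_1$ rules out any nonnegative slope---but you organize them differently.

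The paper proceeds constructively in the style of Lemma~\ref{L3}: from the fact that every line of negative slope lies below $f$ at all but finitely many points, one re-runs the greedy vertex construction $(m_i,a_i)$ and observes that it never terminates; then, since on $D_1$ one has $f(n)\to-\infty$ and $f(n)/n\to0$, the successive slopes $a_i$ are squeezed toward $0$. Your route is instead by contradiction through Proposition~\ref{prop1}: for each $a<0$ the minimum of $f(n)-an$ is attained, hence at some $m_j\in H_f$, which pins $a$ to the subdifferential interval $[a_j,a_{j+1}]$; this immediately excludes both a finite $A_f$ with last slope $a_{k+1}<0$ and an infinite $A_f$ with $\lim a_n<0$. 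Your version is more explicit about why $\Omega(f)\ne\emptyset$ and why every $a_i<0$, points the paper leaves implicit. One small remark: you phrase the conclusion of Proposition~\ref{prop1} as ``$a$ lies in one of the open intervals $(-\infty,a_1),(a_1,a_2),\ldots$'', but what you actually need (and what holds) is the closed-interval containment $a\in[a_j,a_{j+1}]$ whenever $m_j$ is a minimizer; this does not affect your contradictions, since the union is still contained in $(-\infty,a_{k+1}]$ (resp.\ $(-\infty,L]$).
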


\begin{proof} It is not hard to see that the assumptions yield that for any  $l(x)=ax+b$ with $a<0$ there is no or there are finitely many $n\in D$ such that $f(n)\le l(n)$. Let $l_i$ be the same as in Lemma \ref{L3}. 
In this case for $n\in D_1$, that defined in Lemma \ref{L4}, we have $f(n)\to-\infty$ and $f(n)/n\to 0$ as $n\to\infty$. Thus, $a_i\to0$ as $i\to\infty$. 
\end{proof}

\begin{lem}\label{L6} Let $D\subset{\mathbb N}$ be infinite.  Let   $g$ be a function on $D$ such that 
$$
 \lim\limits_{n\to\infty}{\frac{g(n)}{n}}=-\infty. 
$$
Suppose for a function $f$ on $D$ there are  infinitely many $n\in D$ such that $f(n)\le g(n)$. 
Then $\Omega(f)$ is empty. 
\end{lem}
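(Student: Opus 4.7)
The plan is to argue by contradiction: if some $h \in \Omega(f)$ existed, convexity would force $h$ to grow at least linearly from some point onward, which would conflict with $h(n) \le f(n) \le g(n)$ on an infinite subsequence where $g(n)/n \to -\infty$.

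First I would unpack the convexity condition into the usual slope-monotonicity statement: for any three points $a<x<b$ in $D$, the definition of convex given in the paper rearranges to
$$
\frac{h(x)-h(a)}{x-a}\le\frac{h(b)-h(x)}{b-x}.
$$
Applying this with $a$ equal to the two smallest elements of $D$, say $x_0<x_1$, and setting $s_0:=(h(x_1)-h(x_0))/(x_1-x_0)$, I obtain that for every $b\in D$ with $b>x_1$,
$$
h(b)\ge h(x_1)+s_0\,(b-x_1).
$$
Dividing by $b$ shows $\liminf_{b\to\infty} h(b)/b\ge s_0$, a finite real number.

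Next I would use the hypotheses on $f$ and $g$. By assumption there is an infinite sequence $n_j\in D$ with $f(n_j)\le g(n_j)$. Since $h\in\Omega(f)$ means $h\le f$ pointwise on $D$,
$$
\frac{h(n_j)}{n_j}\le\frac{f(n_j)}{n_j}\le\frac{g(n_j)}{n_j}\xrightarrow[j\to\infty]{}-\infty,
$$
so $\liminf_{n\to\infty} h(n)/n=-\infty$, contradicting the linear lower bound obtained from convexity. Hence no such $h$ exists, and $\Omega(f)=\emptyset$.

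There is no real obstacle here; the only subtlety is making sure the slope-monotonicity consequence of the paper's convexity definition is applied cleanly on the discrete set $D$ (it is truly a one-line rearrangement), and being careful that $D$ is infinite so $x_0,x_1$ exist and we can let $b\to\infty$ along $D$. Everything else is a routine limit comparison.
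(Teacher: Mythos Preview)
Your proof is correct and rests on the same idea as the paper's: a convex function on $D$ is bounded below by the line through its first two values (equivalently, secant slopes are nondecreasing), and this is incompatible with having $f(n_j)\le g(n_j)$ along an infinite sequence with $g(n_j)/n_j\to-\infty$. The paper packages this by looking directly at the secants of $f$ from $x_0$ to points of $D_g:=\{n:f(n)\le g(n)\}$ and noting their slopes are unbounded below, whereas you run it as a clean contradiction with a hypothetical $h\in\Omega(f)$; the underlying mechanism is identical, and your write-up is arguably more explicit.
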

\begin{proof}   Let $D_g:=\{n\in D\,|\,f(n)\le g(n)\}$.   Let $l_n$ be a linear function given by two points $(x_0,f(x_0))$ and $(n,f(n))$. By assumption for any $a$ there is $n\in D_g$ such that the slope of $l_n$ is less than $a$. Moreover, there are infinitely many $m$ in $D_g$ with $f(m)<l_n(m)$. This completes the proof. 
\end{proof}

\section{Proof of Theorem \ref{Th1} and its extensions}

Robin \cite[Theorem 2]{Robin} showed that for all $n\ge 3$ 
$$
\rr_0(n)=e^\gamma n\log{\log{n}}-\sigma(n)>-0.6482\frac{n}{\log{\log{n}}}. \eqno (7)
$$
If the RH is false Robin \cite[Theorem 1]{Robin}  proved that  there exist constants $b\in(0,1/2)$ and $c>0$ such that 
$$
\rr_0(n)<-\frac{c\,n\log{\log{n}}}{(\log{n})^b}  \eqno (8)
$$
holds for infinitely many $n$. Thus, if the RH is false there are infinitely many $n\in{\mathbb N}$ such that 
$$
C_1(n):=-\frac{0.6482\,n}{\log{\log{n}}}<\rr_0(n)<C_2(n):=-\frac{c\,n\log{\log{n}}}{(\log{n})^b}.  
$$

\medskip 

 Let $\tau(n)$ be any positive function on $D\subset{\mathbb N}$. Denote 
$$
\rr_\tau(n):=\left(e^\gamma n\log\log{n}-\sigma(n)\right)\tau(n), \; n\in D.  
$$
We defined HA numbers with respect to $\rr_\tau$ as follows:
$$
\ha_\tau(D):=H_{\rr_\tau}(D)=\{m\in D\,|\,\exists  a\in{\mathbb R},\,  \forall x\in D, \,  \rr_\tau(m)-ma\le \rr_\tau(x)-ax\}.
$$
 As above,  $\as_\tau(D)=\{a_1,a_2,...\}$ are slopes of $\rr_\tau$ on $\ha_\tau(D)$ and  we denote $\ha_\tau(D)$ by $\ha_\tau$ for $D=\{n\in{\mathbb N}\,|\, n\ge 5040\}$.

\medskip

The following theorem extends Theorem \ref{Th1}(i). 
\begin{thm}\label{ThM1} Let $\tau(n)>0$ for all $n\ge 5040$. Denote 
$$
\Phi_\tau:= \lim\limits_{n\to\infty}{\frac{\tau(n)}{\sqrt{\log{n}}}}. 
$$
(a) Assume the RH is true. If $\Phi_\tau=\infty$, 
then $\ha_\tau$  is infinite  and $\lim\limits_{n\to\infty}{a_n}=\infty$. \\
(b) If the RH is false and $\Phi_\tau>0$, then $\ha_\tau$  is empty. 
\end{thm}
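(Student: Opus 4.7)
The plan is to reduce both parts to the convex-envelope lemmas of Section~3, namely Lemma~\ref{L3} for (a) and Lemma~\ref{L6} for (b). The key preliminary observation is the identity
$$
\frac{\rr_\tau(n)}{n} \;=\; T(n)\cdot\frac{\tau(n)}{\sqrt{\log n}},
$$
which factors $\rr_\tau/n$ into an arithmetic piece $T(n)$, controlled by the Ramanujan/Robin-type theorems of Sections~1--2, and a purely weight-dependent piece $\tau(n)/\sqrt{\log n}$, whose growth is dictated by the assumption on $\Phi_\tau$.

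For part~(a), I will invoke the strong Ramanujan Theorem~\ref{ThR}: under the RH, $\liminf_{n\to\infty} T(n) \ge c_1 > 0$, hence there is $n_0\ge 5040$ with $T(n) > c_1/2$ for all $n\ge n_0$. Combined with $\Phi_\tau = \infty$, the choice
$$
g(n) \;:=\; \frac{c_1}{2}\cdot\frac{n\,\tau(n)}{\sqrt{\log n}}
$$
satisfies $g(n)/n\to\infty$ and $\rr_\tau(n)\ge g(n)$ for $n\ge n_0$. Lemma~\ref{L3} then delivers simultaneously that $\as_\tau$ is infinite and that $a_n\to\infty$.

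For part~(b), I will start from Robin's inequality~(8): if the RH fails, there exist $b\in(0,1/2)$, $c>0$ and infinitely many $n$ with $\rr_0(n) < -cn\log\log n/(\log n)^b$. The assumption $\Phi_\tau > 0$ (finite or $+\infty$) yields a constant $C>0$ and a threshold $n_0$ such that $\tau(n) \ge C\sqrt{\log n}$ for all $n\ge n_0$. Multiplying the Robin bound by $\tau(n)>0$ on the infinitely many counterexamples produces
$$
\rr_\tau(n) \;=\; \rr_0(n)\,\tau(n) \;<\; -cC\,n\log\log n \cdot (\log n)^{1/2-b}
$$
for infinitely many $n$. Setting $g(n):=-cC\,n\log\log n\cdot(\log n)^{1/2-b}$, we have $g(n)/n\to -\infty$ (this is where the strict inequality $b<1/2$ in Robin's theorem is essential) and $\rr_\tau(n) \le g(n)$ infinitely often. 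Lemma~\ref{L6} then gives $\Omega(\rr_\tau) = \emptyset$. Since any $m\in \ha_\tau = \tilde H_{\rr_\tau}$ would furnish a supporting affine function $h(x)=\rr_\tau(m)+a(x-m)\in\Omega(\rr_\tau)$, emptiness of $\Omega(\rr_\tau)$ forces $\ha_\tau = \emptyset$.

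The only real subtlety is aligning exponents and signs so that the auxiliary $g$ has the correct asymptotic ratio $g(n)/n$ in each case---infinite positive in~(a), infinite negative in~(b). Once that bookkeeping is right, the two structural lemmas of Section~3 finish the argument mechanically; the arithmetic heavy lifting has already been done by Theorem~\ref{ThR} and by Robin's inequalities~(7)--(8).
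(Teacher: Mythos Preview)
Your proof is correct and follows essentially the same route as the paper: both parts rest on the identity $\rr_\tau(n)/n = T(n)\cdot\tau(n)/\sqrt{\log n}$, with part~(a) feeding Theorem~\ref{ThR} (the paper uses its Corollary~\ref{corR} with constant $1.393$ in place of your $c_1/2$) into Lemma~\ref{L3}, and part~(b) feeding Robin's inequality~(8) into Lemma~\ref{L6}. Your explicit remark that $\Omega(\rr_\tau)=\emptyset$ forces $\ha_\tau=\emptyset$ via a supporting affine function is a welcome clarification that the paper leaves implicit.
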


\begin{proof} (a) Suppose the RH is true. Let 
$$
g(n):=\frac{1.393\,n\,\tau(n)}{\sqrt{\log{n}}}.
$$
By Corollary \ref{corR} there is $n_0$ such that for all $n\ge n_0$ we have
$$
\rr_\tau(n)=\frac{nT(n)\tau(n)}{\sqrt{\log{n}}}\ge g(n) \; \mbox{ and by assumption } \;  \lim\limits_{n\to\infty}{\frac{g(n)}{n}}=1.393\Phi_\tau=\infty. 
$$
Then Lemma \ref{L3} with $f=\rr_\tau$ yields that $\lim\limits_{n\to\infty}{a_n}=\infty$.

\medskip

\noindent (b)  Suppose the RH is false. Since $b<1/2$ by (8) there are infinitely many $n\in{\mathbb N}$ such that  
$$
\rr_\tau(n)= \rr_0(n)\tau(n)\le C_2(n)\tau(n) < g(n):=-\frac{c\,n\,\tau(n)\log{\log{n}}}{\sqrt{\log{n}}}.
$$
Then Lemma \ref{L6} with $f=\rr_\tau$ completes the proof. 
\end{proof}

Now we consider a generalization of Theorem \ref{Th1}(ii). 

\begin{thm}\label{ThM2} Let 
$$
\tau(n)>0, \, n\ge 5040, \quad \lim\limits_{n\to\infty}{\frac{\tau(n)}{\log{\log{n}}}}=0 \; \mbox{ and } \; \lim\limits_{n\to\infty}{\frac{\tau(n)\,n\log{\log{n}}}{\sqrt{\log{n}}}}=\infty. 
$$
(a) If the RH is false, then $\ha_\tau$  is infinite, all $a_i<0$ and $\lim\limits_{n\to\infty}{a_n}=0.$ \\
(b) If the RH is true, then $\ha_\tau=\{5040\}$ and   $\as_\tau=\{0\}$. 
\end{thm}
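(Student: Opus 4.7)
The plan is to treat the two cases of Theorem~\ref{ThM2} by separate arguments that both exploit the convex--envelope framework of Section~3. Part (a) will reduce to Lemma~\ref{L5} applied to $f=\rr_\tau$ with comparison functions $g_1, g_2$ extracted from Robin's inequalities (7) and (8) scaled by the positive factor $\tau$. Part (b) will be obtained by identifying the minimum of $\rr_\tau$ on $D$ directly from Robin's criterion (R) under RH, and then using the $\liminf$ supplied by Corollary~\ref{cor2} to show the envelope is horizontal.

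For part (a), I would take
$$
g_1(n) := -\frac{0.6482\,n\,\tau(n)}{\log\log n}, \qquad g_2(n) := -\frac{c\,n\,\tau(n)\log\log n}{(\log n)^b},
$$
with $b\in(0,1/2)$ and $c>0$ the constants of Robin's inequality (8). Multiplying (7) and (8) by $\tau(n)>0$ gives $\rr_\tau(n)\ge g_1(n)$ for all $n\ge 3$ and $\rr_\tau(n)\le g_2(n)$ for infinitely many $n$. The three structural hypotheses of Lemma~\ref{L5} must then be verified: the inequality $g_2\ge g_1$ eventually reduces to $c(\log\log n)^2 \le 0.6482\,(\log n)^b$, which holds because $b>0$; the divergence $g_2(n)\to -\infty$ follows from the hypothesis $\tau(n)\,n\log\log n/\sqrt{\log n}\to\infty$ combined with $(\log n)^b\le\sqrt{\log n}$ (since $b<1/2$); and $g_1(n)/n\to 0$ is immediate from the hypothesis $\tau(n)/\log\log n\to 0$. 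Lemma~\ref{L5} then yields that $\as_\tau$ is infinite and $a_n\to 0$. The sign assertion $a_i<0$ is then a short consequence of strict monotonicity of the slopes: if some $a_k\ge 0$, then $a_n\ge a_{k+1}>0$ for all $n\ge k+1$, contradicting $\lim a_n=0$.

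For part (b), direct evaluation gives $\rr_0(5040)<0$, and hence $\rr_\tau(5040)<0$, while Robin's criterion (R) under RH gives $\rr_0(n)>0$, and hence $\rr_\tau(n)>0$, for every $n\in D$ with $n>5040$. Thus $5040$ is the strict minimum of $\rr_\tau$ on $D$, so the constant function $\rr_\tau(5040)$ lies in $\Omega(\rr_\tau)$ and certifies $5040\in\ha_\tau$ with witness $a=0$. To show that no other element belongs to $\ha_\tau$ and that the envelope is exactly this constant (which is what $\ha_\tau=\{5040\}$ and $\as_\tau=\{0\}$ encode), I would argue by contradiction: if some $h\in\Omega(\rr_\tau)$ with $h(5040)=\rr_\tau(5040)$ satisfied $h(n_0)>\rr_\tau(5040)$ at some $n_0>5040$, then its slope on $[5040,n_0]$ would be some $\alpha>0$, and by convexity its slopes on $[n_0,n]$ for $n>n_0$ would all be $\ge\alpha$, giving $h(n)/n\ge\alpha+o(1)$ and hence $\liminf \rr_\tau(n)/n\ge\alpha>0$. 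But Corollary~\ref{cor2} supplies infinitely many $n$ with $\rr_\tau(n)<(c_2+\varepsilon)\,n\,\tau(n)/\sqrt{\log n}$, and since $\tau(n)/\sqrt{\log n}\le\tau(n)/\log\log n\to 0$, we have $\liminf \rr_\tau(n)/n=0$, a contradiction. The same $\liminf$ information simultaneously excludes any positive critical slope emanating from $(5040,\rr_\tau(5040))$, so the only slope is $0$.

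The main obstacle will be the asymptotic bookkeeping in part~(a): the three hypotheses of Lemma~\ref{L5} each rest on a different interplay between the Robin exponent $b$ and the two rate hypotheses on $\tau$, and one must line them up so that $b>0$ delivers $g_2\ge g_1$, $b<1/2$ delivers $g_2\to -\infty$, and the decay of $\tau/\log\log n$ delivers $g_1/n\to 0$ simultaneously. A second, more conceptual, delicate point is part~(b): one must justify not merely that $5040$ is a minimum of $\rr_\tau$ (which gives $0\in\as_\tau$) but that the envelope is actually flat, so that no positive slope appears; this genuinely needs the Corollary~\ref{cor2} $\liminf$, as the positivity $\rr_\tau>0$ alone is compatible with arbitrarily large envelope slopes.
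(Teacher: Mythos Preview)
Your argument is correct. Part~(a) is identical to the paper's: you apply Lemma~\ref{L5} to $f=\rr_\tau$ with $g_1=C_1\tau$ and $g_2=C_2\tau$, and you spell out the verification of the three hypotheses (the paper simply asserts they hold). Your extra line deriving $a_i<0$ from strict monotonicity and $a_n\to 0$ is a clean way to extract that sign condition.

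Part~(b) is where you diverge from the paper. The paper invokes Lemma~\ref{L4} with $g_1(n)=1.393\,n\tau(n)/\sqrt{\log n}$ and $g_2(n)=1.558\,n\tau(n)/\sqrt{\log n}$, obtaining that $\as_\tau$ is finite with last slope $0$, and then uses $\rr_\tau(5040)<0<\rr_\tau(n)$ to rule out any intermediate vertices. You instead bypass Lemma~\ref{L4} and argue directly on the envelope: the constant $\rr_\tau(5040)$ lies in $\Omega(\rr_\tau)$, and any strictly larger convex minorant would force $\liminf \rr_\tau(n)/n>0$, contradicting the sequence from Corollary~\ref{cor2} along which $\rr_\tau(n)/n\le (c_2+\varepsilon)\tau(n)/\sqrt{\log n}\to 0$. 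Both routes use exactly the same two inputs (Robin's criterion for the minimum at $5040$, Corollary~\ref{cor2} for the sublinear upper envelope), so the difference is purely in packaging: the paper's version keeps the convex-envelope reasoning encapsulated in Lemma~\ref{L4}, while yours is self-contained and makes the role of the $\liminf$ more transparent.
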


\begin{proof} (a) Suppose the RH is false. Let
$$
g_1(n):=C_1(n)\tau(n), \quad g_2(n):=C_2(n)\tau(n).  
$$
Then by (7) we have that $g_1(n)<\rr_\tau(n)$  for all $n\in D$ and by (8) the inequality $\rr_\tau(n)< g_2(n)$ holds for infinitely many $n$. Since $f=\rr_\tau$, $g_1$ and $g_2$ satisfy the assumption of Lemma \ref{L5} we have (a). 

\medskip 

(b)  Suppose the RH is true. Let
$$
g_1(n):=\frac{1.393\,n\,\tau(n)}{\sqrt{\log{n}}}, \quad g_2(n):=\frac{1.558\,n\,\tau(n)}{\sqrt{\log{n}}}.  
$$
Then Corollary \ref{cor2} yields that $f=\rr_\tau$, $g_1$ and $g_2$ satisfy the assumption of Lemma \ref{L4}. Since for all $n>5040$ we have 
$\rr_\tau(n)>0>\rr_\tau(5040)$, there are not $a_i\le0$. Thus,  $\ha_\tau=\{5040\}$. 
\end{proof} 

\medskip

\begin{proof}[Proof of Theorem \ref{Th1}] This theorem immediately follows from Theorems \ref{ThM1} and  \ref{ThM2}.  
Indeed, if $\tau(n)=(\log{n})^s$, then $\rr_\tau(n)=\rr_s(n)$. It clear that $\Phi_\tau=\infty$ in Theorem  \ref{ThM1} only if $s>1/2$ and the assumptions in Theorem \ref{ThM2} hold if $s\le0$. 
\end{proof}








\medskip

From the Lagarias inequalities \cite[Lemmas 3.1, 3.2]{Lagarias} for $n>20$ we have 
$$
R_0(n)+h_n\le L_0(n)\le R_0(n)+\frac{7n}{\log{n}}. \eqno (9)
$$
Let $L_\tau(n):=L_0(n)\tau(n)$. Then (9) yields analogs of Theorems \ref{ThM1} and  \ref{ThM2} for $L_\tau$. We can just substitute $\rr_\tau$ by $L_\tau$. 

\begin{thm}\label{ThL} (i) 
If the RH is true, $\tau(n)>0$ and $\Phi_\tau=\infty$, then there are infinitely many $\ha$ numbers with respect to $L_\tau$ and $\lim\limits_{n\to\infty}{a_n}=\infty$. . \\
 (ii) Let $\tau(n)$ satisfy the assumptions of Theorem \ref{ThM2}. If the RH is false, then there are infinitely many $\ha$ numbers with respect to $L_\tau$, all $a_i<0$ and $\lim\limits_{n\to\infty}{a_n}=0.$ 

\end{thm}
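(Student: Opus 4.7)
The plan is to reduce Theorem \ref{ThL} to Theorems \ref{ThM1} and \ref{ThM2} via the Lagarias two-sided bound (9). Multiplying (9) by $\tau(n) > 0$ yields, for $n > 20$,
\[
\rr_\tau(n) + h_n \tau(n) \;\le\; L_\tau(n) \;\le\; \rr_\tau(n) + \frac{7\, n\, \tau(n)}{\log n},
\]
so $L_\tau$ differs from $\rr_\tau$ only by a positive perturbation that is uniformly of lower order than the dominant terms driving the arguments of Theorems \ref{ThM1} and \ref{ThM2}. This lets me invoke the same Lemmas \ref{L3} and \ref{L5} used before, after mild adjustments to the auxiliary bounding functions $g$, $g_1$, $g_2$ in order to absorb this perturbation.

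For part (i), assuming RH, Corollary \ref{corR} gives $\rr_\tau(n) \ge 1.393\, n\, \tau(n)/\sqrt{\log n}$ for all sufficiently large $n$, so the lower half of (9) gives $L_\tau(n) \ge 1.393\, n\, \tau(n)/\sqrt{\log n} =: g(n)$. Since $\Phi_\tau = \infty$ we have $g(n)/n \to \infty$, and Lemma \ref{L3} applied with $f = L_\tau$ yields that the HA set with respect to $L_\tau$ is infinite with $a_n \to \infty$.

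For part (ii), assuming RH is false, I would set
\[
\tilde g_1(n) := C_1(n)\, \tau(n), \qquad \tilde g_2(n) := C_2(n)\, \tau(n) + \frac{7\, n\, \tau(n)}{\log n},
\]
so that Robin's (7) combined with the lower half of (9) gives $L_\tau(n) > \tilde g_1(n)$ for large $n$, while Robin's (8) combined with the upper half of (9) gives $L_\tau(n) \le \tilde g_2(n)$ for infinitely many $n$. The hypothesis $\tau(n)/\log\log n \to 0$ yields $\tilde g_1(n)/n \to 0$ at once. The only mildly delicate bookkeeping step is verifying that $\tilde g_2(n) \to -\infty$ and $\tilde g_2 \ge \tilde g_1$ eventually, and this is the place I expect to spend most of the care: both properties reduce to the observation that, since $b < 1/2$, the correction term $7 n \tau(n)/\log n$ is asymptotically negligible compared with $|C_2(n)|\,\tau(n) = c\, n\, \tau(n) \log\log n/(\log n)^b$ (under the assumption $n \tau(n) \log\log n/\sqrt{\log n} \to \infty$), and $|C_2(n)|/|C_1(n)| = c (\log\log n)^2 / (0.6482 (\log n)^b) \to 0$. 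Once these verifications are in place, Lemma \ref{L5} with $f = L_\tau$ finishes the proof.
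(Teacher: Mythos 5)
Your proposal is correct and follows essentially the same route as the paper, which proves Theorem \ref{ThL} simply by invoking the two-sided Lagarias bound (9) to transfer the arguments of Theorems \ref{ThM1} and \ref{ThM2} from $\rr_\tau$ to $L_\tau$. In fact you supply more detail than the paper does (the adjusted $\tilde g_2$ and the verification that the $7n\tau(n)/\log n$ term is absorbed since $b<1/2$), and these verifications are accurate.
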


\medskip

\medskip

\medskip

\medskip

\medskip

O. R. Musin, School of Mathematical and Statistical Sciences, University of Texas Rio Grande Valley,  One West University Boulevard, Brownsville, TX, 78520.

 {\it E-mail address:} omusin@gmail.com

\end{document}